\newtheorem{theorem}{Theorem}[section]
\newtheorem{proposition}{Proposition}[section]
\theoremstyle{definition}
\begin{document}
\title{On a Reformulation of the Commutator Subgroup}

\author{ Paul. A. Cummings} 
\author{Brian Ortega}

\date{September, 2011}
\subjclass[2000]{20E05}
\keywords{Commutator subgroup, Semigroups, Orientable equations}

\begin{abstract} 

For semigroup $S$, a commutative congruence $\sigma_{orient}$ on $S$ and a subsemigroup Orientable($S$) of $S$ were both introduced in \cite{CCT}. Here we demonstrate that when the semigroup is in fact a group $G$, then Orientable($G$) is its commutator subgroup $[G,G]$ and $G / \sigma_{orient}$ is the abelian quotient group $G / [G,G]$.
\end{abstract}

\maketitle

\hbox{\hskip 240pt Version of \the\month-\the\day-\the\year}

\setcounter{section}{-1}
\section{Introduction} \label{s_intro}

We will come to these reformulations through the theory of semigroups. Recall a semigroup $S$ is a set equipped with a closed associative binary operation. A group $G$ can be thought of as a semigroup. However in general, unlike groups, semigroups need not contain an identity element or inverse elements, although there are semigroups with an identity element as well as semigroups with inverse elements. See \cite{H} or \cite {P} for instance. Herein, though, when we speak of a semigroup we will only be interested in it as a set closed under some specified associative binary operation unless otherwise stated. 

The elements in the subsemigroup Orientable($S$) are characterized as those elements of $S$ that are solutions to an \emph{orientable} equation, a certain type of simple equation in one variable over $S$. Two elements $u$ and $v$ in $S$ are $\sigma_{orient}$-congruent to one another if they are solutions to a two-variable orientable equation, a natural extension of the one variable variety. We note and the reader can verify this after we review orientable equations in Section \ref{orientable equations} that if $S$ did contain an identity element then the elements belonging to Orientable ($S$) could also be characterized as those elements that are $\sigma_{orient}$-congruent to $S$'s identity element. 

Now in a certain sense, as somewhat indicated in the above paragraph, the subsemigroup Orientable($S$) determines the congruence $\sigma_{orient}$ on $S$ and, as demonstrated in \cite{CCT}, the quotient semigroup $S / \sigma_{orient}$ is a commutative semigroup and 
Orientable($S)$ serves as its identity element, in the case when Orientable($S$) is non-empty. Already, at least, this ``looks like'' and ``acts like'' how the commutator subgroup $[G,G]$ determines a congruence on $G$ that gives rise to a quotient group $G / [G,G]$ that is a commutative group, the abelianization of $G$, and where the commutator subgroup $[G,G]$ serves as its identity element.

To further strengthen these resemblances it was also shown in \cite{CCT} - using diagrams embedded on orientable surfaces and hence the origin of the modifier orientable for orientable equations - that when semigroup $S$ and group $G$ are both defined by the same semigroup presentation $P = \langle \hspace{.1cm} X \hspace{.1cm} \vert \hspace{.1cm} R  \hspace{.1cm} \rangle$, then $w$ belongs to Orientable($S$) if and ony if $w$ belongs to $[G,G]$ where $w$ is a positive word on the alphabet $X$ and the natural map from $S$ into $G$ factors through $\sigma_{orient}$ as an embedding into $G / [G, G]$. The aim of this article is to show that when the semigroup $S$ is itself a group $G$ then these resemblances are in fact identically the same.

In Section \ref{orientable equations} we review orientable equations and reference some propositions concerning same found in \cite{CCT}. We will also review and reference material concerning the commutator subgroup and the abelianization of a group. In Section \ref{main results} we demonstrate our main results, namely, $ G/\sigma_{orient} = G/[G,G]$ and Orientable($G$) $=$ $[G,G]$ for any group $G$. Our proofs will be carried out simply by mathematical induction, at this stage there will be no need for the geometry of diagrams embedded on orientable surfaces.

\section{Orientable Equations}  \label{orientable equations}

The following definitions and results - with a slight change in notation, namely, we now use only lower case letters representing elements in $S$ - are taken directly from \cite{CCT}. Our main results in Section \ref{main results} will simply follow from mathematical induction and from the definitions of equations $(1)$ and $(2)$ below. We include the following propositions merely as context for $(1)$ and $(2)$. Let $S$ be a semigroup. We pick a symbol $t\notin S$ to be a variable. A {\it  simple equation over $S$ in the variable $t$} is an expression of the form 
\bigskip 
\begin{center}  $a=btc$ \qquad$(1)$\end{center} 
\bigskip

\noindent where $a,b,c\in S$, $a$ is non-empty, and least one of the elements $b$ and $c$ is non-empty. An element $w \in S$ satisfies this equation if upon substitution $a=bwc$ in $S$. If there are non-empty elements $x_1, ..., x_n$ (not necessarily distinct) in $S$ and factorizations $a=\prod_{i=1}^{n_1}a_i$, $b=\prod_{j=1}^{n_2}b_j$ and $c=\prod_{k=1}^{n_3}c_k$ where $n_1 + n_2 + n_3 = 2n$, $n_1 = n_2 + n_3$ and each $x_\nu$ occurs once among $a_1, ..., a_{n_1}$ and once among $b_1, ..., b_{n_2}, c_1, ..., c_{n_3}$ then we call the simple equation an {\it orientable equation}. In other words a simple equation is orientable if the factors $x_\nu$ can be somehow paired up across the equality sign of the simple equation.

Elements of $S$ that satisfy orientable equations are called {\it orientable} elements of $S$ and by {\it Orientable($S$)} we mean the set $\{  w \hspace{.1cm} | \hspace{.1cm}  w \hspace{.1cm}  $is an orientable element of$ \hspace{.1cm} S \}$.
\bigskip

The following appears in Proposition 1.1 in \cite{CCT}. In fact, there it is also claimed that Orientable($S$) is a {\it unitary} subsemigroup of $S$ although we will have no need of this concept and fact.

\begin{proposition} \label{orientable(S)} Let S be a semigroup then Orientable(S) is a subsemigroup of S.
\end{proposition}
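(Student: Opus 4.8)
The plan is to prove this by showing that Orientable($S$) is closed under the operation of $S$: since it is already a subset of $S$ and that operation is associative, this is all that is required. So fix $u,v\in$ Orientable($S$), and let $a=buc$ and $\hat a=\hat b v\hat c$ be orientable equations satisfied by $u$ and $v$, with factorizations $a=\prod_i a_i$, $b=\prod_j b_j$, $c=\prod_k c_k$ (and primed analogues) realizing the pairing in the definition. It is worth recording at the outset that an orientable equation has some slack: if $a=btc$ is orientable and $s\in S$, then $sa=(sb)tc$ is again orientable --- adjoin the factors of $s$ to the $a$-list and to the $b$-list, which leaves the conditions $n_1=n_2+n_3$ and $n_1+n_2+n_3=2n$ intact and keeps $u$ as a solution --- and the three mirror versions hold as well; so the two witnessing equations may be replaced by such inflations.

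The main idea is to \emph{splice} the two equations. Multiplying them gives
\[
a\hat a=(buc)(\hat b v\hat c)=b\,u\,(c\hat b)\,v\,\hat c ,
\]
and one would like to read the outer expression as an orientable equation with left block $b$, right block $\hat c$, and solution $uv$, the pairing being the concatenation of the two given pairings. This reading is legitimate precisely when the interface block $c\hat b$ is empty; equivalently, the construction succeeds once $u$ is exhibited as the solution of an orientable equation with empty $c$-block, $a'=b'u$, and $v$ as the solution of one with empty $\hat b$-block, $\hat a'=v\hat c'$, for then $a'\hat a'=(b'u)(v\hat c')=b'(uv)\hat c'$ is exactly the equation wanted, and the side conditions and the pairing for it are then immediate.

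Thus the heart of the matter --- and the step I expect to be the obstacle --- is this normalization: passing from an arbitrary witnessing equation to one in which the block on the far side of the variable is empty. For a group it is painless, since every $w\in[G,G]$ can be written $w=p^{-1}q$ with $p$ and $q$ finite products of one and the same multiset of elements of $G$ (induction on commutator length, using $[g,h]=(h^{-1}g^{-1})^{-1}(g^{-1}h^{-1})$ together with $q[g,h]=[\,qgq^{-1},\,qhq^{-1}\,]\,q$), and $q=p\,w$ is then literally an orientable equation with empty $c$-block. For a general semigroup I do not expect this normal form to be obtainable by elementary manipulation, since left and right multiplication never shorten the blocks $b$ and $c$; this is exactly the juncture at which the argument of \cite{CCT} passes from algebra to topology, encoding an orientable equation as a van Kampen diagram on an orientable surface with a single boundary circle carrying the equation and realizing the splice above as a boundary connected sum of the two diagrams, which remains orientable, so that reading off its boundary label produces an orientable equation solved by $uv$. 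Granting the splice, the remaining verifications are routine bookkeeping.
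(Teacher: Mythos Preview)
The paper does not actually prove this proposition: it simply quotes it from \cite{CCT} (Proposition~1.1 there), so there is no in-paper argument to compare against. Your proposal, likewise, does not give a self-contained proof for a general semigroup: after correctly reducing closure to a ``splice'' of two witnessing equations and correctly isolating the obstruction --- the interface block $c\hat b$ sitting between $u$ and $v$ --- you explicitly defer the general case to the surface-diagram machinery of \cite{CCT}. In that respect your proposal and the paper are aligned: both ultimately rely on \cite{CCT} for the semigroup statement.

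What you add beyond the paper is a self-contained treatment of the special case $S=G$ a group, via the normal form $w=p^{-1}q$ with $p$ and $q$ products of the same multiset. That argument is correct (your identity $q[g,h]=[\,qgq^{-1},\,qhq^{-1}\,]\,q$ does carry the induction), and it produces an orientable equation $q=p\,t$ with empty right block, after which your splice goes through cleanly. Note, however, that this group-case argument is essentially a variant of the $\Leftarrow$ direction of the paper's Theorem~\ref{Orientable(G)}: there one shows directly, by induction on commutator length, that every $g\in[G,G]$ satisfies an explicit orientable equation, without first passing through a normal form. So your extra work is sound but orthogonal to Proposition~\ref{orientable(S)} as stated (which concerns arbitrary semigroups), and where it does apply it is subsumed by the later theorem.

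One small remark on the write-up: your opening paragraph records that an orientable equation can be inflated by left- or right-multiplication while remaining orientable, which is true, but you never use this in what follows --- as you yourself note, the difficulty is \emph{shrinking} $c$ or $\hat b$ to empty, and inflation only grows the blocks.
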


We now introduce two-variable orientable equations, modeled after the single variable orientable equations.  
\bigskip

 Let $S$ be a semigroup. We pick two symbols $t_{1}$ and $t_{2}$ both $\notin S$ to be variables. A {\it two-variable simple equation over $S$} in the variables $t_{1}$ and $t_{2}$ is an expression of the form \bigskip \begin{center}  $at_{1}b=ct_{2}d$ \qquad$(2)$\end{center} \bigskip

\noindent where $a,b,c, $and$ d \in S$, some or possibly all of which may be empty. An ordered pair $(u,v)$ $\in S \times S$ satisfies this equation if upon substitution $aub=cvd$ in $S$. Analogous to the one-variable equations of section 1, if there are non-empty elements $x_1, ..., x_n$ (not necessarily distinct) in $S$ and factorizations $a=\prod_{i=1}^{n_1}a_i$, $b=\prod_{j=1}^{n_2}b_j$, $c=\prod_{k=1}^{n_3}c_k$, and $d=\prod_{l=1}^{n_4}d_l$  where $n_1 + n_2+n_3 + n_4=2n$, $n_1 + n_2 = n_3 + n_4$ and each $x_\nu$ occurs once among $a_1, ..., a_{n_1}, b_1, ..., b_{n_2}$ and once among $c_1, ..., c_{n_3}, d_1, ..., d_{n_4}$ then we call such two-variable simple equation a {\it two-variable orientable equation over S}. 

\bigskip

 Let $u$ and $v$ be two elements in $S$. We say $u$ is {\it orientably-equivalent} to $v$ in $S$ and write $ u \sigma_{orient} w$ if the ordered pair $(u,v)$ satisfies a two-variable orientable equation over $S$. 

\bigskip

The following proposition follows from Claims 4.1, 4.2, 4.3, and 4.4 in \cite{CCT}. 

\begin{proposition} \label{congruence} Let $S$ be a semigroup then $\sigma_{orient}$ is a congruence on $S$.
\end{proposition}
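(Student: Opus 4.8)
The plan is to verify, straight from the definition of a two–variable orientable equation, the four conditions that make $\sigma_{orient}$ a congruence on $S$ — reflexivity, symmetry, transitivity, and compatibility with the multiplication — by exhibiting in each case an explicit orientable equation, together with a pairing of its coefficient–factors, satisfied by the relevant pair. Reflexivity and symmetry are immediate. For $u\in S$ the pair $(u,u)$ satisfies $u\,t_1=t_2\,u$, which is orientable with the single paired factor $x_1=u$ (both sides evaluate to $uu$, and $u$ occurs once among the factors on each side of the equality). If $(u,v)$ satisfies $a\,t_1\,b=c\,t_2\,d$ via factorizations $a=\prod a_i$, $b=\prod b_j$, $c=\prod c_k$, $d=\prod d_l$ with $\{a_i\}\cup\{b_j\}=\{c_k\}\cup\{d_l\}$ as multisets, then $(v,u)$ satisfies $c\,t_1\,d=a\,t_2\,b$ with the same factorizations; the multiset condition is symmetric in the two sides, so the new equation is again orientable.

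For transitivity the idea is to splice the two witnessing equations together. Suppose $u\,\sigma_{orient}\,v$ is witnessed by the orientable equation $\alpha\,t_1\,\beta=\gamma\,t_2\,\delta$ (so $\alpha u\beta=\gamma v\delta$ in $S$, with a pairing of $\{\alpha_i,\beta_j\}$ against $\{\gamma_k,\delta_l\}$) and $v\,\sigma_{orient}\,w$ by $\alpha'\,t_1\,\beta'=\gamma'\,t_2\,\delta'$. I would then consider the equation $\alpha\,t_1\,\beta\alpha' v\beta'=\gamma v\delta\gamma'\,t_2\,\delta'$. Substituting $t_1=u$, $t_2=w$ and using first $\alpha u\beta=\gamma v\delta$ and then $\alpha' v\beta'=\gamma' w\delta'$ shows that $(u,w)$ satisfies it, and it is orientable: pair the factors of $\alpha,\beta$ with those of $\gamma,\delta$ as in the first equation, the factors of $\alpha',\beta'$ with those of $\gamma',\delta'$ as in the second, and pair the displayed copy of $v$ on the left against the displayed copy of $v$ on the right; the numerical side–conditions follow by adding those of the two equations, plus $1$ for the matched $v$'s. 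Hence $u\,\sigma_{orient}\,w$. (If $S$ is a monoid and $v$ is its identity, simply delete the two copies of $v$ from the equation.)

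What then remains is compatibility: $u\,\sigma_{orient}\,v$ must imply $su\,\sigma_{orient}\,sv$ and $us\,\sigma_{orient}\,vs$ for every $s\in S$. Here I would first note that for any $p,q\in S$ the pair $(pq,qp)$ satisfies the orientable equation $t_1\,p=p\,t_2$ (both sides equal $pqp$, with sole paired factor $p$), so $pq\,\sigma_{orient}\,qp$ holds unconditionally; feeding this and transitivity into $su\,\sigma_{orient}\,us$ and $vs\,\sigma_{orient}\,sv$ reduces the first assertion to the second. The step $u\,\sigma_{orient}\,v\Rightarrow us\,\sigma_{orient}\,vs$ is the one I expect to be the real obstacle: the witnessing equation $\alpha u\beta=\gamma v\delta$ need not present $u$ (or $v$) at an endpoint, so one cannot merely append $s$; and whereas in the free semigroup $\sigma_{orient}$ is nothing but equality of letter–multisets — from which $su\,\sigma_{orient}\,sv$ is transparent — a general $S$ carries no such transportable invariant. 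The route I would take is the diagrammatic one of \cite{CCT}: regard the orientable equation as a diagram on an orientable surface with two distinguished variable–regions, graft onto it a single new edge labelled $s$ abutting each variable–region, and read off the resulting orientable equation satisfied by $(us,vs)$; this is the substance of Claims 4.1--4.4 of \cite{CCT}, and it is the only part of the proposition not settled by an elementary manipulation of equations.

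Finally, it is worth recording that for the case actually needed in the sequel — $S=G$ a group — compatibility requires none of this machinery: since $s$ is invertible, $(su,sv)$ already satisfies the orientable equation $(\alpha s^{-1})\,t_1\,\beta=(\gamma s^{-1})\,t_2\,\delta$ and $(us,vs)$ satisfies $\alpha\,t_1\,(s^{-1}\beta)=\gamma\,t_2\,(s^{-1}\delta)$, each obtained from the original pairing by adjoining the single factor $s^{-1}$ to both sides.
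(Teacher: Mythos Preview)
Your arguments for reflexivity, symmetry, and transitivity are correct: in each case the equation you write down is genuinely two-variable orientable and is satisfied by the indicated pair (for transitivity, substituting $t_1=u$, $t_2=w$ and applying first $\alpha u\beta=\gamma v\delta$ and then $\alpha'v\beta'=\gamma'w\delta'$ does collapse both sides to $\gamma v\delta\gamma'w\delta'$, and the pairing you describe makes the coefficient-multisets match). Your reduction of left-compatibility to right-compatibility via $pq\,\sigma_{orient}\,qp$ is also fine, and your remark that in the group case compatibility follows immediately by inserting $s^{-1}$ is correct and is all the present paper actually needs.

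As for comparison with the paper: the paper gives no proof of this proposition at all---it simply cites Claims~4.1--4.4 of \cite{CCT}. So your write-up already goes further than the paper does, supplying elementary equation-level arguments for three of the four conditions and correctly isolating right-compatibility in a general semigroup as the one place where the diagrammatic machinery of \cite{CCT} is genuinely invoked. In that sense your approach and the paper's coincide exactly where it matters (both defer to \cite{CCT} for the hard step), while you additionally make explicit what is routine.
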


\bigskip

The following follows from Proposition 4.1 in \cite{CCT} where it is also shown that the quotient semigroup $S/\sigma_{orient}$ is {\it cancellative} although we will nave no need for this concept or fact.

\begin{proposition} \label{commutative} Let $S$ be a semigroup then the quotient semigroup $S/\sigma_{orient}$ is a commutative semigroup.
\end{proposition}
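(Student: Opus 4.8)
The plan is to verify directly that $uv \, \sigma_{orient} \, vu$ for every pair $u,v \in S$. Since $\sigma_{orient}$ is a congruence by Proposition~\ref{congruence}, the operation on $S/\sigma_{orient}$ is well defined with $[u][v] = [uv]$, so establishing $uv \, \sigma_{orient} \, vu$ for all $u,v$ is exactly what is needed for $S/\sigma_{orient}$ to be commutative. Thus the whole argument reduces to exhibiting, for each $u$ and $v$, one two-variable orientable equation over $S$ that is satisfied by the ordered pair $(uv, vu)$.

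First I would write down the candidate equation. In the notation of $(2)$, take $a$ and $d$ empty, $b = vu$ and $c = uv$, that is, the equation
\[
t_1 \, v u \;=\; u v \, t_2 .
\]
Substituting $t_1 = uv$ and $t_2 = vu$ gives $(uv)(vu) = (uv)(vu)$ in $S$, so the pair $(uv, vu)$ satisfies it. It then remains to check that this simple equation is orientable in the sense of the definition preceding Proposition~\ref{congruence}. Assuming, as we may, that $u$ and $v$ are non-empty (if one of them is empty then $uv = vu$ already in $S$, so $uv \, \sigma_{orient} \, vu$ holds by reflexivity of the congruence), take $n = 2$ with $x_1 = u$ and $x_2 = v$, and use the factorizations $b = b_1 b_2$ with $b_1 = v$, $b_2 = u$ (so $n_1 = 0$, $n_2 = 2$) and $c = c_1 c_2$ with $c_1 = u$, $c_2 = v$ (so $n_3 = 2$, $n_4 = 0$). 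Then $n_1 + n_2 + n_3 + n_4 = 4 = 2n$ and $n_1 + n_2 = 2 = n_3 + n_4$, and each of $u$ and $v$ occurs exactly once among $b_1, b_2$ and exactly once among $c_1, c_2$; the factors pair up across the equals sign as $u \leftrightarrow u$ and $v \leftrightarrow v$. Hence $(uv, vu)$ satisfies a two-variable orientable equation and $uv \, \sigma_{orient} \, vu$.

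Putting the cases together yields $uv \, \sigma_{orient} \, vu$ for all $u, v \in S$, whence $S/\sigma_{orient}$ is commutative. I do not anticipate any genuine obstacle here: the only thing that has to be \emph{found} is the equation $t_1 v u = u v \, t_2$, and once it is written down everything else is routine bookkeeping against the definitions of $(1)$ and $(2)$. (The substantive part of Proposition~4.1 of \cite{CCT} is the cancellativity assertion, which plays no role in what we need.)
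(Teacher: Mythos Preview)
Your argument is correct: the equation $t_1\,vu = uv\,t_2$ is a two-variable orientable equation satisfied by $(uv,vu)$, and that is exactly what is needed. The paper itself gives no proof of this proposition --- it simply cites Proposition~4.1 of \cite{CCT} --- so there is nothing in the text to compare against; what you have supplied is a clean, self-contained verification that the paper omits. One small remark: your parenthetical about the case ``if one of them is empty'' is unnecessary, since $u$ and $v$ are by hypothesis elements of $S$ and hence non-empty; the convention about possibly empty coefficients applies to the constants $a,b,c,d$ in the equation template, not to the elements of $S$ you are comparing.
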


The following follows from Propostion 4.3 in \cite{CCT}

\begin{proposition} \label{identity} Let $S$ be a semigroup. Then $Orientable(S)$ is the identity element for $S/\sigma_{orient}$, when Orientable($S$) is non-empty. 
\end{proposition}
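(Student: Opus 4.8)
The plan is to reduce the statement to a single substantive claim and then obtain everything else formally. Write $T=S/\sigma_{orient}$ and, for $s\in S$, let $\overline{s}$ be its image in $T$; recall from Proposition~\ref{commutative} that $T$ is commutative. The claim I would isolate is:

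\emph{Absorption Claim.} For every $w\in\mathrm{Orientable}(S)$ and every $s\in S$ one has $ws\ \sigma_{orient}\ s$ (equivalently, by commutativity, $sw\ \sigma_{orient}\ s$).

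Granting this, everything else is routine. First, the Absorption Claim says that $\overline{w}$ is a right, hence (by commutativity) two-sided, identity of $T$ for every orientable $w$; since identities are unique, all orientable elements have one and the same image $e\in T$, so $\mathrm{Orientable}(S)$ maps onto the single element $e$ and $e$ is the identity of $T$. (One can also see the ``single image'' part directly: for orientable $w_1,w_2$ the Claim gives $\overline{w_1}=\overline{w_1}\,\overline{w_2}=\overline{w_2}$.) Next, to see that $\mathrm{Orientable}(S)$ is \emph{all} of the $\sigma_{orient}$-class $e$, suppose $w'\ \sigma_{orient}\ w$ with $w$ orientable. Applying the Absorption Claim with $s=w$ and using that $\sigma_{orient}$ is a congruence (Proposition~\ref{congruence}) yields $w'w\ \sigma_{orient}\ ww\ \sigma_{orient}\ w$; pick a two-variable orientable equation $\alpha\, t_1\,\beta=\gamma\, t_2\,\delta$ witnessed by the pair $(w'w,\,w)$, so that $\alpha w'w\beta=\gamma w\delta$ in $S$ with the factors of $\alpha,\beta$ pairing with those of $\gamma,\delta$. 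Then $w'$ satisfies the one-variable equation $\gamma w\delta=\alpha\, t_1\,(w\beta)$, and this equation is orientable: take the single element $w$ as a factor on each side (it occurs as a factor of $w\beta$ on the left and of $\gamma w\delta$ on the right), and pair the remaining factors exactly as in the two-variable equation. Hence $w'\in\mathrm{Orientable}(S)$, and $\mathrm{Orientable}(S)$ is precisely the identity class of $T$; the hypothesis $\mathrm{Orientable}(S)\neq\varnothing$ enters only to guarantee that this class is non-empty.

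It remains to prove the Absorption Claim, and this is the step I expect to be the real obstacle. Fix an orientable $w$ with witnessing equation $b\,w\,c=a$ together with factorizations $a=\prod a_i$, $b=\prod b_j$, $c=\prod c_k$ in which the multiset of the $a_i$ coincides with the multiset of the $b_j$ and the $c_k$ taken together (at least one of $b,c$ non-empty). For any $\alpha,\beta\in S$ the pair $(ws,\,s)$ trivially satisfies the two-variable equation $\alpha\, t_1\,\beta=(\alpha w)\, t_2\,\beta$, since both sides evaluate to $\alpha w s\beta$; so the task is to choose $\alpha$ and $\beta$ --- using the equation $b\,w\,c=a$ --- so that this becomes an \emph{orientable} equation, which, after using one and the same factorization of $\beta$ on both sides, amounts to factoring $\alpha$ and $\alpha w$ with a common multiset of factors. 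The idea is to assemble $\alpha$ out of $b$ and $c$ so that the extra ``$w$'' appearing in $\alpha w$ is absorbed by the part of $a$ it is responsible for, exploiting that $a$ carries exactly the combined content of $b$ and $c$; this is precisely the move carried out in \cite{CCT} by threading a band labelled $s$ through the orientable surface diagram of $b\,w\,c=a$ and checking that the enlarged diagram is still orientable, and turning it into a clean combinatorial argument --- together with the passage from a single $w$ to an arbitrary element of the subsemigroup $\mathrm{Orientable}(S)$, afforded by Proposition~\ref{orientable(S)} --- is the crux. As a consistency check, in the special case $S=G$ a group one can bypass the diagrams: abelianizing the orientable equation $b\,w\,c=a$ forces $w\in[G,G]$, so $w^{-1}$ is a product of commutators, and taking $\alpha=w^{-1}$ (written as such, with $\alpha w$ factored through cancelling pairs matching a suitable factorization of $w^{-1}$) makes the required orientable equation explicit and the claim follows by direct computation.
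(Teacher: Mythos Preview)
There is nothing in the paper to compare against: the paper does not prove this proposition but simply records it as following from Proposition~4.3 of \cite{CCT}. Your proposal therefore goes strictly beyond what the present paper offers.

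Your reduction of the statement to the Absorption Claim is correct, and the deductions you draw from it are sound. In particular: from the Absorption Claim it does follow that every orientable element maps to a two-sided identity of $T$, hence to one and the same class $e$; and your argument that this class contains \emph{only} orientable elements is valid. Concretely, from a two-variable orientable witness $\alpha t_1\beta=\gamma t_2\delta$ for $(w'w,w)$ you correctly manufacture the one-variable equation $\gamma w\delta=\alpha\,t\,(w\beta)$ satisfied by $w'$; adjoining $w$ as an extra paired factor on each side preserves both the factor-count identity $n_3+1+n_4=n_1+1+n_2$ and the multiset matching, so the equation is indeed orientable. Your group-case consistency check with $\alpha=w^{-1}$ and $\alpha w=1$ factored as cancelling pairs also works as stated.

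The one genuine gap --- which you yourself flag --- is that the Absorption Claim for a general semigroup is not established: you sketch the geometric idea from \cite{CCT} (threading a band through the surface diagram) but do not supply a self-contained combinatorial argument, and the naive choices of $\alpha,\beta$ built directly from $b,c,a$ do not obviously succeed. Since the paper itself defers the entire proposition to \cite{CCT}, this is not a deficiency relative to the paper; but if an independent proof is the goal, the Absorption Claim is exactly where the substance lies and remains to be filled in.
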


\section{Main Results} \label{main results}   

Recall that for group $G$ a {\it commutator} in $G$ is an element of the form $xyx^{-1}y^{-1}$ where $x$ and $y$ are elements of $G$ and $x^{-1}$ and $y_{-1}$ are their inverses, respectively.  The commutator subgroup $[G,G]$ of $G$ is the smallest subgroup of $G$ containing all of $G$'s commutators. It is well known that $[G,G]$ is a normal subgroup and that $[G,G]$ determines a congruence $\sim$ on $G$ where for elements $g$ and $h$ of $G$ we have $g \sim h$ when $gh^{-1} \in [G,G]$. It is also well known that $\sim$ determines the abelian (commutative) quotient group $G/[G,G]$ - the abelianization of $G$ - where $[G,G]$ serves as its identity element. All of these claims can be found in most undergraduate abstract algebra textbooks, we cite two \cite{Hu, L}. For element $g \in G$ we will use the notation $[g]$ to denote its congruence class in the $G/[G,G]$, i.e. $[g] = \{ h \vert h \in G \hspace{.1cm} $and$ \hspace{.1cm} h \sim g \}$.

\begin{theorem} \label{Orientable(G)} If $G$ is a group then Orientable($G$) $=$ $[G,G]$, the commutator subgroup of $G$. 
\end{theorem}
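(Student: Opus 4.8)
The plan is to prove the two inclusions $[G,G] \subseteq \mathrm{Orientable}(G)$ and $\mathrm{Orientable}(G) \subseteq [G,G]$ separately, in each case arguing directly from the definition of an orientable equation.

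For the inclusion $[G,G] \subseteq \mathrm{Orientable}(G)$, I would first show that every commutator $xyx^{-1}y^{-1}$ is orientable by exhibiting an explicit orientable equation it satisfies: taking $a = xyx^{-1}y^{-1}$, $b = $ empty, $c = $ empty is not allowed since then $a = w$ trivially with no variable occurrence structure to pair; instead I would use something like $a = xy x^{-1} y^{-1}$ with the factorization of $a$ on the left matching $w = xyx^{-1}y^{-1}$ substituted into an equation of the form $xy \cdot t \cdot = \cdot t \cdot xy$ or, more carefully, pick $b,c$ so that $w$ appears on one side while the same letters $x,y,x^{-1},y^{-1}$ reappear as the prescribed factors on the other. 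The content here is just that $w = xyx^{-1}y^{-1}$ can be written so that the multiset of factors $\{x, y, x^{-1}, y^{-1}\}$ occurs once on each side of the equality $a = bwc$, which is immediate. Then, since $\mathrm{Orientable}(G)$ is a subsemigroup (Proposition~\ref{orientable(S)}) and $G$ is a group, I would check it is closed under inverses — the inverse of a solution of an orientable equation solves the ``mirror'' orientable equation — so $\mathrm{Orientable}(G)$ is a subgroup containing all commutators, hence contains $[G,G]$.

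For the reverse inclusion $\mathrm{Orientable}(G) \subseteq [G,G]$, let $w \in G$ satisfy an orientable equation $a = bwc$ with the paired factorizations $a = a_1\cdots a_{n_1}$, $b = b_1\cdots b_{n_2}$, $c = c_1\cdots c_{n_3}$, where the elements $x_1,\dots,x_n$ each occur once among the $a_i$ and once among the $b_j, c_k$. Pass to the abelianization: in $G/[G,G]$ the equation $a = bwc$ becomes $[a] = [b][w][c]$, and since the group is abelian and each $x_\nu$ contributes one factor to $[a]$ and one factor to $[b][c]$, we get $[a] = [b]\,[a]\,[c]$ after collecting, forcing $[b][c] = [1]$, hence $[w] = [a][b]^{-1}[c]^{-1}$; a short computation with the pairing shows $[w] = [1]$, i.e. $w \in [G,G]$. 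This is really an induction on $n$ (or a direct counting argument): each paired letter $x_\nu$ cancels between the two sides once we are in the abelian quotient. I would phrase it as the induction promised in the introduction, peeling off one matched pair $x_\nu$ at a time.

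The main obstacle I anticipate is bookkeeping in the second inclusion: making the ``peel off a matched pair'' induction precise when a single value $x_\nu$ may occur several times (the $x_i$ need not be distinct) and when the matched occurrences sit on the same side of the equation versus opposite sides. The cleanest route is probably to avoid induction on the syntax of the equation altogether and instead simply apply the abelianization homomorphism $\pi\colon G \to G/[G,G]$ to $a = bwc$: because $\pi(a)$, $\pi(b)$, $\pi(c)$ are determined by the multisets of $\pi$-images of the prescribed factors, and those multisets coincide across the equality by the orientability hypothesis, one reads off $\pi(w) = 1$ in one line. I would present the induction version only if a self-contained argument not invoking the structure of $G/[G,G]$ is wanted; otherwise the homomorphism argument is both shorter and less error-prone.
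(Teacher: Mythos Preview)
Your argument for $\mathrm{Orientable}(G)\subseteq[G,G]$ via the abelianization map is exactly what the paper does: from $a=bgc$ one gets $[g]=[b]^{-1}[a][c]^{-1}$ in $G/[G,G]$, and orientability forces $[a]=[bc]$ there, so $[g]=[1]$. No induction on $n$ is needed, and the paper does not perform one for this direction either; your instinct to just apply $\pi$ is the right one.

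For $[G,G]\subseteq\mathrm{Orientable}(G)$ your route differs from the paper's. The paper inducts on the number $n$ of commutators in a product $g=\prod_{i=1}^n x_iy_ix_i^{-1}y_i^{-1}$, at each step explicitly enlarging an orientable equation: from $a=btc$ satisfied by $gy_nx_ny_n^{-1}x_n^{-1}$ it manufactures the orientable equation $a\,x_nx_n^{-1}y_ny_n^{-1}=bt\,y_nx_ny_n^{-1}x_n^{-1}c$ satisfied by $g$. Your plan instead shows each commutator is orientable and then invokes Proposition~\ref{orientable(S)} (closure under products). This is a genuinely different, more modular argument; it is shorter but leans on a result quoted from \cite{CCT}, whereas the paper's induction is self-contained and actually exhibits the orientable equation solved by an arbitrary element of $[G,G]$.

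Two small cleanups. First, the explicit equation you are hunting for in the base case is simply $xy = t\,yx$: here $a=xy$, $b$ is empty, $c=yx$, and the paired factors are $x$ and $y$; there is no need to involve $x^{-1},y^{-1}$ as factors. Second, the detour through closure under inverses is unnecessary: since $(xyx^{-1}y^{-1})^{-1}=yxy^{-1}x^{-1}$ is again a commutator, every element of $[G,G]$ is already a product of commutators, so the subsemigroup property alone suffices.
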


\begin{proof} $\Rightarrow$ Let $g$ $\in$ $G$ that satisfies some orientable equation $a=btc$. Since $a=bgc$ in $G$ then clearly $[a] = [b][g][c]$ in $G / [G,G]$. As $G / [G,G]$ is abelian, $[b^{-1}][a][c^{-1}] = [g]$. Since $a=btc$ is an orientable equation and $G/[G,G]$ is abelian, clearly $[b^{-1}][a][c^{-1}] = [1]$. Hence $[g] = [1]$ and so $g$ $\in$ $[G,G]$.  

$\Leftarrow$ Let $g$ $\in$ $[G,G]$. So there must exist elements $x_{i}, y_{i} \in G$ for $1 \leq i \leq n$ such that $g = \prod_{i=1}^{n}x_{i}y_{i}x_{i}^{-1}y_{i}^{-1}$. We induct on $n$. For $n=1$ we have $x_{1}y_{1}x_{1}^{-1}y_{1}^{-1} = g$.  Hence $x_{1}y_{1} = gy_{1}x_{1}$ and therefore $g$ satisfies the orientable equation $x_{1}y_{1} = ty_{1}x_{1}$. So next assume $g = \prod_{i=1}^{n}x_{i}y_{i}x_{i}^{-1}y_{i}^{-1}$. Hence $gy_{n}x_{n}y_{n}^{-1}x_{n}^{-1} = \prod_{i=1}^{n-1}x_{i}y_{i}x_{i}^{-1}y_{i}^{-1}$ and therefore by induction $gy_{n}x_{n}y_{n}^{-1}x_{n}^{-1}$ satisfies some orientable equation $a=btc$, i.e., $a=bgy_{n}x_{n}y_{n}^{-1}x_{n}^{-1}c$. Since $x_{n}x_{n}^{-1}y_{n}y_{n}^{-1} = 1$ then $ax_{n}x_{n}^{-1}y_{n}y_{n}^{-1} = bgy_{n}x_{n}y_{n}^{-1}x_{n}^{-1}c$. Now lastly observe that  $ax_{n}x_{n}^{-1}y_{n}y_{n}^{-1} = bty_{n}x_{n}y_{n}^{-1}x_{n}^{-1}c$ is also an orientable equation that $g$ satisfies. 

\end{proof}

\begin{theorem} If $G$ is a group then $G / \sigma_{orient} = G / [G,G]$, the abelianzation of $G$.
\end{theorem}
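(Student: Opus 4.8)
The plan is to show the two congruences $\sigma_{orient}$ and $\sim$ coincide on $G$, from which the equality of quotient semigroups (both of which are then groups) follows immediately. Since a congruence is completely determined by its classes, and the class of the identity determines all classes in a group-congruence setting, the cleanest route is: first establish that $\sigma_{orient}$ is exactly the relation $\sim$ by a direct two-sided argument, using Theorem~\ref{Orientable(G)} together with Propositions~\ref{congruence}, \ref{commutative}, and \ref{identity}.

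First I would record that $\sigma_{orient}$ is a congruence on $G$ (Proposition~\ref{congruence}) and that $G/\sigma_{orient}$ is commutative (Proposition~\ref{commutative}); since the quotient of a group by a congruence is a group, $G/\sigma_{orient}$ is an abelian group. Next, by Proposition~\ref{identity}, the class $\mathrm{Orientable}(G)$ is the identity of $G/\sigma_{orient}$, and by Theorem~\ref{Orientable(G)} this class equals $[G,G]$. Now for the forward inclusion $\sigma_{orient}\subseteq{\sim}$: if $u\,\sigma_{orient}\,v$ then $[u]=[v]$ in the abelian group $G/\sigma_{orient}$, so $[uv^{-1}]$ equals the identity class $\mathrm{Orientable}(G)=[G,G]$, hence $uv^{-1}\in[G,G]$, i.e. $u\sim v$. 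For the reverse inclusion: if $u\sim v$ then $uv^{-1}\in[G,G]=\mathrm{Orientable}(G)$, so $uv^{-1}$ satisfies an orientable equation and hence $uv^{-1}\,\sigma_{orient}\,1$; multiplying on the right by $v$ (using that $\sigma_{orient}$ is a congruence, so compatible with the group operation) gives $u\,\sigma_{orient}\,v$. Thus $\sigma_{orient}={\sim}$ as relations on $G$, so the quotient sets $G/\sigma_{orient}$ and $G/[G,G]$ are literally the same, with the same induced multiplication, and the latter is the abelianization of $G$ by the standard facts recalled before Theorem~\ref{Orientable(G)}.

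The one point requiring a small amount of care — and the nearest thing to an obstacle — is the step ``$uv^{-1}\,\sigma_{orient}\,1$ implies $u\,\sigma_{orient}\,v$.'' This uses that $\sigma_{orient}$ is a congruence (Proposition~\ref{congruence}), so it is right-compatible with multiplication: from $uv^{-1}\,\sigma_{orient}\,1$ we get $(uv^{-1})v\,\sigma_{orient}\,1\cdot v$, i.e. $u\,\sigma_{orient}\,v$. One should note that $1\in G$ here, so $1\cdot v=v$ makes sense — this is exactly where we use that $S$ is genuinely a group and not merely a semigroup, matching the remark in the introduction that for a semigroup with identity, $\mathrm{Orientable}(S)$ is the $\sigma_{orient}$-class of the identity. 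Symmetrically, in the forward direction one passes from $[u]=[v]$ to $[uv^{-1}]=[1]$ using compatibility with right multiplication by $v^{-1}$ and commutativity. With these congruence-compatibility observations in place the proof is a short chain of equalities of equivalence relations, so I do not anticipate any serious difficulty; the substantive content has already been done in Theorem~\ref{Orientable(G)}.
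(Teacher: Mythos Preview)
Your argument is correct, but it takes a genuinely different route from the paper's. The paper deliberately avoids invoking Propositions~\ref{congruence}--\ref{identity} (it states them ``merely as context'') and instead works directly with the defining equations (1) and (2): for $[g]_\sigma\subseteq[g]$ it unpacks a two-variable orientable equation $agb=chd$, passes to $G/[G,G]$, and uses abelianness together with the pairing condition to get $[ab]=[cd]$ and hence $[g]=[h]$; for $[g]\subseteq[g]_\sigma$ it uses Theorem~\ref{Orientable(G)} to obtain a one-variable equation $a=bgh^{-1}c$ and then explicitly exhibits the two-variable orientable equation $at_1h^{-1}=bt_2h^{-1}c$ witnessing $h\,\sigma_{orient}\,g$. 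Your proof, by contrast, treats Propositions~\ref{congruence}--\ref{identity} as black boxes: you argue that $G/\sigma_{orient}$ is an abelian group whose identity class is $\mathrm{Orientable}(G)=[G,G]$, and then both inclusions reduce to passing between ``$uv^{-1}$ lies in the identity class'' and ``$u$ and $v$ lie in the same class'' via congruence compatibility. Your approach is shorter and more structural, and makes transparent that all the content resides in Theorem~\ref{Orientable(G)}; the paper's approach is more self-contained at the level of definitions and has the minor advantage of producing an explicit two-variable orientable equation rather than appealing to the cited propositions. One small remark: in your forward inclusion the commutativity of $G/\sigma_{orient}$ is not actually needed---once you know the quotient is a group, $[u]_\sigma=[v]_\sigma$ immediately gives $[uv^{-1}]_\sigma=[1]_\sigma$.
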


\begin{proof} Let $g$ be some element of group $G$ and consider the congruency classes $[g]_{\sigma}$ and $[g]$ of $G / \sigma_{orient}$ and $G / [G,G]$, respectively. We will show $[g]_{\sigma} = [g]$. So consider $h \in [g]_{\sigma}$. Hence $h$ and $g$ satisfy a two-variable orientable equation as given in $(2)$. Therefore for some elements $a$, $b$, $c$, and $d$ in $G$ we have $agb = chd$ in $G$. Hence [agb] = [chd] in $G / [G,G]$. Hence [ab][g] = [cd][h] in $G / [G,G]$ as $G / [G,G]$ is abelian. By the definition of a two-variable orientable equation it easy to see that we must also have [ab]= [cd] in $G / [G,G]$. Hence $[g] = [h]$ and therefore $[g]_{\sigma} \subseteq [g]$. Lastly, consider $h \in [g]$. Hence $gh^{-1} \in [G,G]$. Therefore, by Theorem \ref{Orientable(G)}, $gh^{-1} \in Orientable(G)$. Hence $gh^{-1}$ satisfies an orientable equation as given in $(1)$. Therefore there exists elements $a$, $b$, and $c$ in $G$ such that $a = bgh^{-1}c$ in $G$. It is easy to to check that $at_{1}h^{-1} = bt_{2}h^{-1}c$ is a two-variable orientable equation over $G$ and that $g$ and $h$ satisfy it upon substituting $h$ into $t_1$ and $g$ into $t_2$.  Therefore $h \in [g]_{\sigma}$ and $[g] \subseteq [g]_{\sigma}$. 
\end{proof}

\section{Acknowledgments} \label{acknowledge}

The authors would like to express their deep gratitude to Maritza Martinez, Director of the Educational Opportunities Program at the University at Albany, State University of New York, for her support and encouragement throughout this undergraduate research project. 

We would also like to thank Anahi Bolanos, Grace Coste, Stephine Rodriguez, and Yasser Teouri for their valued participation in a weekly math seminar in part devoted to the second named author's presentation of some of the materials herein.

\end{document}